\documentclass[11pt,a4paper]{amsart}
\usepackage{amsmath,amsthm,amssymb,latexsym}
\usepackage{graphicx}

\newcommand{\bd}{{\mathbb{D}}}

\newcommand{\bz}{{\mathbb{Z}}}
\newcommand{\bc}{{\mathbb{C}}}

\newcommand{\cs}{\mathcal{S}}


\renewcommand{\a}{\alpha}
\renewcommand{\b}{\beta}
\renewcommand{\l}{\lambda}

\newcommand{\s}{\sigma}

\renewcommand{\d}{\delta}
\newcommand{\dd}{\Delta}
\renewcommand{\o}{\omega}

\newcommand{\g}{\gamma}

\newcommand{\ep}{\varepsilon}
\newcommand{\z}{\zeta}

\newcommand{\ka}{\kappa}

\newcommand{\bsl}{\backslash}
\newcommand{\ovl}{\overline}

\DeclareMathOperator{\diag}{\rm diag}

\DeclareMathOperator{\dist}{\rm dist}

\allowdisplaybreaks
\numberwithin{equation}{section}

\newtheorem{theorem}{Theorem}[section]

\theoremstyle{definition}

\newtheorem{remark}[theorem]{Remark}

\begin{document}

\title[Jacobi matrices]
{A remark on the discrete spectrum of non-self-adjoint Jacobi operators}
\author[L. Golinskii]{L. Golinskii}

\address{B. Verkin Institute for Low Temperature Physics and
Engineering, Ukrainian Academy of Sciences, 47 Nauky ave., Kharkiv 61103, Ukraine}
\email{golinskii@ilt.kharkov.ua}

\date{\today}

\keywords{Non-self-adjoint Jacobi matrices; discrete spectrum; perturbation determinant; Jost solutions}
\subjclass[2010]{47B36, 47A10, 47A75}

\maketitle

\begin{abstract}
We study the trace class perturbations of the whole-line, discrete Laplacian and obtain a new bound for the
perturbation determinant of the corresponding non-self-adjoint Jacobi operator. Based on this bound, we refine
the Lieb--Thirring inequality due to Hansmann--Katriel. The spectral enclosure for such operators is also discussed.
\end{abstract}

\section*{Introduction}

In the last two decades there was a splash of activity around the spectral theory of non-self-adjoint perturbations
of some classical operators of mathematical physics, such as the Laplace and Dirac operators on the whole space,
their fractional powers, and others. Recently, there has been some interest in studying certain discrete models of the above
problem. In particular, the structure of the spectrum for compact, non-self-adjoint perturbations of the free Jacobi
and the discrete Dirac operators has attracted much attention lately. Actually the problem concerns the discrete component
of the spectrum and the rate of its accumulation to the essential spectrum. Such type of results under various assumptions
on the perturbations are united under a common name {\it Lieb--Thirring inequalities}. In the case of the free whole-line Jacobi
operator, such inequalities include the distance from an eigenvalue to the whole essential spectrum $[-2,2]$, as well as the 
distance to its endpoints. For a nice account of the existing results on the Lieb--Thirring inequalities for non-self-adjoint Jacobi 
operators, the reader may consult two recent surveys \cite{haka11} and \cite[Section 5.13]{fran20}, and references therein.

The main object under consideration is a whole-line Jacobi matrix
\begin{equation}\label{jacmat}
J=J(\{a_j\},\{b_j\},\{c_j\})_{j\in\bz} =\begin{bmatrix}
\ddots & \ddots & \ddots &  & \\
 & a_{-1} & b_0 & c_0 &  & \\
 & & a_0 & b_1 & c_1 & & \\
 & &  & a_1 & b_2 & c_2 & \\
 & &  & & \ddots & \ddots & \ddots
\end{bmatrix}, 
\end{equation}
with uniformly bounded complex entries and $a_nc_n\not=0$. The spectral theory of the underlying non-self-adjoint Jacobi operator includes, 
among others, the structure of their spectra. We denote by $J_0$ the discrete Laplacian, i.e., $J_0=J(\{1\},\{0\},\{1\})$.
If $J-J_0$ is a compact operator, that is,
$$ \lim_{n\to\pm\infty} a_n-1=\lim_{n\to\pm\infty} c_n-1=\lim_{n\to\pm\infty} b_n=0, $$
the geometric image of the spectrum is utterly clear
$$ \s(J)=\s_{ess}(J_0)\cup \s_d(J)=[-2,2]\cup \s_d(J), $$
the discrete component $\s_d(J)$ is an at most countable set of points in $\bc\backslash [-2,2]$ with the only possible limit points on $[-2,2]$. 
To get some quantitative information on the rate of accumulation one has to impose further assumptions on the perturbation. Our case of
study in this note is the {\it trace class} perturbations of the discrete Laplacian
\begin{equation}\label{trclper}
J-J_0\in\cs_1 \ \Leftrightarrow \ \sum_{n=-\infty}^\infty (|1-a_n|+|b_n|+|1-c_n|)<\infty.
\end{equation}
Now the discrete spectrum is the set of isolated eigenvalues of finite algebraic multiplicity.

The currently best result which governs the behavior of the discrete spectrum is due to Hansmann--Katriel \cite[Theorem 1]{haka11}. It states that
for each $\ep\in(0,1)$ there is a constant $C(\ep)>0$ so that
\begin{equation}\label{hankat}
\sum_{\l\in\s_d(J)} \frac{\dist(\l,[-2,2])^{1+\ep}}{|\l^2-4|^{\frac12+\frac{\ep}4}}\le C(\ep)\|J-J_0\|_1.
\end{equation}
The result is known to be sharp \cite{bostam20} in the sense that \eqref{hankat} is false for $\ep=0$. Yet the question arises naturally whether it
is possible to drop at least one of the two small parameters on the left side. We answer this question affirmatively in this note.
The price we pay is a constant on the right side.

\begin{theorem}\label{mainth}
Let $J-J_0\in\cs_1$. Then for each $\ep\in(0,1)$ there is a constant $C(\ep)>0$ so that
\begin{equation}\label{mainlt}
\sum_{\l\in\s_d(J)} \frac{\dist(\l,[-2,2])}{|\l^2-4|^{{\frac{1-\ep}2}}}\le C(\ep)\dd, \quad \dd:=\sum_{n=-\infty}^\infty (|b_n|+|1-a_nc_n|).
\end{equation}
\end{theorem}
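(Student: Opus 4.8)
The plan is to compare the discrete spectrum of $J$ with the zeros of a perturbation determinant and to use a sharp pointwise estimate on that determinant. Since $J-J_0\in\cs_1$, the perturbation determinant $L(\l)=\det\big((J-\l)(J_0-\l)^{-1}\big)$ is a well-defined analytic function on $\bc\bsl[-2,2]$ whose zeros, counted with multiplicity, coincide with the eigenvalues of $J$ in $\s_d(J)$. It is convenient to uniformize: the conformal map $z\mapsto \l=z+z^{-1}$ sends the unit disk $\bd$ onto $\bbc\bsl[-2,2]$, the boundary circle $\bt$ onto $[-2,2]$ (doubly covered), and carries $z=\pm1$ to $\l=\pm2$. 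Writing $h(z):=L(z+z^{-1})$, we obtain a bounded analytic function on $\bd$ whose zeros in $\bd$ correspond to $\s_d(J)$, and the statement \eqref{mainlt} becomes a weighted Blaschke-type condition on these zeros. The elementary identities $\dist(\l,[-2,2])\asymp |1-z^2|\,(1-|z|)/|z|$ and $|\l^2-4|=|z-z^{-1}|^2=|1-z^2|^2/|z|^2$ translate the left side of \eqref{mainlt} into $\sum (1-|z_k|)\,|1-z_k^2|^{-\ep}$, up to absolute constants.

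The analytic heart of the argument is a bound on $h$ of the form $\log|h(z)|\le \Gamma(z)$, where $\Gamma$ reflects the behaviour of $L$ near $[-2,2]$ and near the two endpoints $\pm2$. For trace class perturbations one has the crude bound $\log|L(\l)|\le \|J-J_0\|_1/\dist(\l,[-2,2])$; in the $z$-variable this reads $\log|h(z)|\le c\,\|J-J_0\|_1\,|z|/\big(|1-z^2|(1-|z|)\big)$. The point is that one can do better in terms of the combinatorial quantity $\dd=\sum(|b_n|+|1-a_nc_n|)$: a diagonal conjugation $J\mapsto D^{-1}JD$ leaves $L$ invariant and allows one to replace $a_n,c_n$ by their geometric mean $\sqrt{a_nc_n}$, so that $\|J-J_0\|_1$ effectively gets replaced by $\dd$. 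Combining this with the expected endpoint improvement — near $\l=\pm2$, i.e. $z=\pm1$, the singularity of $\log|L|$ is only of order $|1-z^2|^{-1}$ rather than $(1-|z|)^{-1}|1-z^2|^{-1}$ — yields a majorant of the shape
\begin{equation*}
\log|h(z)|\le C\,\dd\,\Big(\frac{1}{1-|z|}+\frac{1}{|1-z^2|}\Big).
\end{equation*}
I expect this refined determinant bound, and in particular isolating the genuinely one-dimensional endpoint singularity, to be the main obstacle; it is exactly the place where the two-parameter estimate \eqref{hankat} gets improved to the one-parameter estimate \eqref{mainlt}.

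Granting such a majorant, the conclusion follows from a quantitative Jensen/Blaschke-type lemma for analytic functions on the disk. The general principle (a Borichev–Golinskii–Kupin type inequality) is: if $h$ is analytic on $\bd$, $h(0)\ne0$, and $\log|h(z)|\le K\,\prod_j |1-z/\zeta_j|^{-\alpha_j}$ near finitely many boundary points $\zeta_j\in\bt$, together with $\log|h(z)|\le K/(1-|z|)$, then the zeros satisfy $\sum_k (1-|z_k|)\,\prod_j |1-z_k/\zeta_j|^{(\alpha_j-1+\ep)_+}\le C(\ep)\,K$ for every $\ep>0$. Applying this with $\zeta_1=1$, $\zeta_2=-1$, $\alpha_1=\alpha_2=1$ and $K=C\dd$ gives precisely $\sum_k (1-|z_k|)\,|1-z_k^2|^{-1+\ep}\le C(\ep)\dd$. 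Finally I would multiply and divide by $|1-z_k^2|$ and use the dictionary above: $(1-|z_k|)|1-z_k^2|^{-1+\ep} = (1-|z_k|)|1-z_k^2|^{-1}\cdot|1-z_k^2|^{\ep}$, and since $\dist(\l_k,[-2,2])\asymp (1-|z_k|)|1-z_k^2|/|z_k|$ while $|\l_k^2-4|^{(1-\ep)/2}=|1-z_k^2|^{1-\ep}/|z_k|^{1-\ep}$, one checks that the left side of \eqref{mainlt} is $\asymp \sum_k (1-|z_k|)\,|1-z_k^2|^{-1+\ep}$ up to a constant depending only on $\ep$ (on the complement of a fixed neighbourhood of $\bt$ the sum is trivially finite and absorbed into $C(\ep)$). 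This yields \eqref{mainlt}. A minor technical point to dispatch along the way is the normalization $h(0)\ne0$, i.e. $L(\infty)\ne0$, which holds automatically since $L(\l)\to1$ as $\l\to\infty$.
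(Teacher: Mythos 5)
Your overall skeleton (perturbation determinant, Zhukovsky uniformization, Blaschke-type condition for the zeros in the disk) is the same as the paper's, but the analytic heart of your argument --- the majorant for $\log|h|$ --- is both unproved and, more importantly, of the wrong shape, and the zero-counting lemma you invoke to compensate for it is false as stated. The majorant you propose, $\log|h(z)|\le C\dd\bigl((1-|z|)^{-1}+|1-z^2|^{-1}\bigr)$, still blows up as $|z|\to1$ radially at \emph{every} boundary point. For such hypotheses the Borichev--Golinskii--Kupin/Hansmann--Katriel machinery is essentially sharp: from $\log|h(z)|\le K(1-|z|)^{-p}\prod_j|z-\zeta_j|^{-q_j}$ one gets $\sum_k(1-|z_k|)^{p+1+\ep}\prod_j|z_k-\zeta_j|^{(q_j+\ep-1)_+}\le C(\ep)K$, and the exponent $p+1+\ep$ on $(1-|z_k|)$ cannot be lowered to $1$ when $p\ge1$. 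Your stated ``general principle'' keeps the factor $(1-|z_k|)^{1}$ while allowing $\log|h|\le K/(1-|z|)$; if that were true it would essentially yield the $\ep=0$ case of \eqref{hankat}, which is known to fail (B\"ogli--Stampach). So from your majorant you could only recover an inequality with $\dist(\l,[-2,2])$ raised to a power strictly larger than $1$, not the claimed first power.

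The entire content of the paper is precisely the removal of the $(1-|z|)^{-1}$ singularity from the determinant bound: via the Jost solutions of the transformed recurrences \eqref{der}, \eqref{del}, the associated discrete Volterra equations, and the Wronskian representation of $L$, one proves $\log|L(z,J)|\le 16|z|\,|1-z^2|^{-1}(\dd^{1/2}+\dd)$ on all of $\bd$, i.e.\ the only singularities are at $z=\pm1$ and each is of order exactly one. Only then does Hansmann--Katriel's Theorem 4 (the $p=0$ case, which keeps the unperturbed factor $1-|z_k|$) apply and give $\sum_k(1-|z_k|)\,|1-z_k^2|^{\ep}|z_k|^{-\ep}\le C(\ep)(\dd^{1/2}+\dd)$, which translates into \eqref{mainlt}. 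Two smaller points: your dictionary contains an exponent slip --- the left side of \eqref{mainlt} corresponds to $\sum_k(1-|z_k|)\,|1-z_k^2|^{+\ep}|z_k|^{-\ep}$, not to $\sum_k(1-|z_k|)\,|1-z_k^2|^{-1+\ep}$ (the two differ badly near $z=\pm1$, the only region that matters); and the diagonal-conjugation heuristic for replacing $\|J-J_0\|_1$ by $\dd$ is the right intuition (cf.\ the Remark following Theorem \ref{mainth}), but by itself it produces no determinant bound.
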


If $J$ is a discrete Schr\"odinger operator, that is, $a_n=c_n\equiv 1$, then
\begin{equation}\label{mainlt1}
\sum_{\l\in\s_d(J)} \frac{\dist(\l,[-2,2])}{|\l^2-4|^{{\frac{1-\ep}2}}}\le C(\ep)\|J-J_0\|_1.
\end{equation}

\begin{remark}
The appearance of the value $\dd$ in place of $\|J-J_0\|_1$ might seem reasonable. Indeed, given a Jacobi matrix $J$, consider a class $S(J)$ 
of Jacobi matrices
\begin{equation*}
\begin{split}
S(J) &=\{\widehat J:=T^{-1}JT, \ T=\diag(t_j)_{j\in\bz} \ {\rm is \ a \ diagonal \ isomorphism \ of}\ \ell^2(\bz)\}, \\
\widehat J &=J\bigl(\{a_jr_j\}, \{b_j\}, \{c_jr_j^{-1}\}\bigr), \quad r_n=\frac{t_n}{t_{n+1}}, \quad n\in\bz.
\end{split}
\end{equation*} 
As $\widehat J$ is similar to $J$, the equality $\s_d(\widehat J)=\s_d(J)$ holds. So the left side of \eqref{mainlt} does not alter within the class
$S(J)$. The same is true for the value $\dd$, in contrast to $\|J-J_0\|_1$.
For the class $S(J_0)$ both sides of \eqref{mainlt} vanish, whereas $\|J-J_0\|_1$, $J\in S(J_0)$, can be arbitrarily large.

Next, $|1-a_nc_n|\le |1-a_n|+|1-c_n|+|1-a_n||1-c_n|$, and so
$$ \dd\le 3\|J-J_0\|_1+\|J-J_0\|^2. $$
We see that for small perturbations the value $\dd$ has at least the same order as $\|J-J_0\|_1$.
\end{remark}

The so-called {\it perturbation determinant} 
\begin{equation*}
L(\l,J):=\det(I+(J-J_0)(J_0-\l)^{-1}),
\end{equation*}
introduced by M.G. Krein \cite{gokr67} in the late 50-th, comes in as a principal analytic tool. The main feature
of this analytic function on the resolvent set $\rho(J_0)=\ovl\bc\bsl [-2,2]$ is that the zero divisor agrees with the discrete spectrum
of the perturbed operator $J$, and moreover, the multiplicity of each zero equals the algebraic multiplicity of the corresponding eigenvalue.
So the original problem of spectral theory can be restated as the classical problem of the zero distributions of analytic functions, which goes back 
to Jensen and Blaschke.

The arguments in \cite{haka11} pursue in two steps. The first one results in a certain bound for the perturbation determinant, typical for the functions
of non-radial growth. The classes of such analytic (and subharmonic) functions in the unit disk were introduced and studied in \cite{bgk09, fago09} (for
some advances see \cite{bgk18}). The Blaschke-type conditions for the zero sets (Riesz measures) were proved therein, with an important amplification
in \cite[Theorem 4]{haka11}, better adapted for applications. The second step is just the latter result applied to the bound mentioned above.

In our approach to the problem the argument in the first step is totally different. Instead of certain operator-theoretic means and the Fourier
transform, we deal with the associated three-term recurrence relation
\begin{equation}\label{3term}
a_{k-1}u_{k-1}+b_ku_k+c_ku_{k+1}=\l(z)u_k, \qquad k\in\bz, \quad \l(z)=z+\frac1z,
\end{equation}
and its modifications. Here $\l(\cdot)$ is the Zhukovsky function which maps the unit disk onto the resolvent set $\rho(J_0)=\ovl\bc\bsl [-2,2]$.
The solution of \eqref{3term} $u=(u_k)_{k\in\bz}$ from $\ell^2(\bz)$ is exactly the eigenvector of $J$ with the eigenvalue $\l$. Next, the solutions
$u^{\pm}=(u^{\pm}_k)_{k\in\bz}$ are called the {\it Jost solutions at $\pm\infty$} if
\begin{equation}\label{jossol}
\lim_{n\to\pm\infty} z^{\mp n}u_n^{\pm}(z)=1, \qquad z\in\bd_0:=\bd\bsl\{0\}.
\end{equation}
We study the Jost solutions by reducing the difference equation \eqref{3term} to a Volterra-type discrete integral equation,  
see, e.g. \cite[Section 7.5]{te00} and \cite{eggo05}. The bounds for the Jost solutions stem from successive approximations method. 
The perturbation determinant arises as the Wronskian of the Jost solutions, so its bound is then straightforward.

Note also, that the relation
\begin{equation}\label{pede}
|L(z,J)-1| \le (4x+5x^2)e^{4x}, \qquad x:=\frac{2|z|}{|1-z^2|}(\dd^{1/2}+\dd) 
\end{equation}
for $z$ in the open unit disk $\bd:=\{|z|<1\}$ provides some information about the spectral enclosure, see \eqref{perdetto1} and Remark \ref{speenc}.

\section{Jost solutions and discrete Volterra equations}
\label{s1}

The following two companions of the main difference equation \eqref{3term} are of particular concern
\begin{equation}\label{der}
v_{k-1}(z)+b_kv_k(z)+a_kc_kv_{k+1}(z)=\Bigl(z+\frac1z\Bigr)v_k(z), \qquad k\in\bz,
\end{equation}
and
\begin{equation}\label{del}
a_{k-1}c_{k-1}w_{k-1}(z)+b_kw_k(z)+w_{k+1}(z)=\Bigl(z+\frac1z\Bigr)w_k(z), \qquad k\in\bz,
\end{equation}
$z\in\bd_0$. Put
\begin{equation}\label{equiv}
\a_n:=\prod_{j=-\infty}^{n-1} a_j, \quad \g_n:=\prod_{j=-\infty}^{n-1} c_j^{-1}, \quad n\in\bz.
\end{equation}
It is easy to see that $u=(u_k)$ is a solution of \eqref{3term} if and only if 
$$ u_k=\a_kv_k, \qquad \Bigl(u_k=\g_kw_k\Bigr), \qquad k\in\bz, $$
where $v=(v_k)$ ($w=(w_k)$) is a solution of \eqref{der} (\eqref{del}), respectively. In particular, if $u^{\pm}=(u^{\pm}_k)$ are the
Jost solutions of \eqref{3term}, then
\begin{equation}\label{trans}
\begin{split}
u_n^+ &=\prod_{j=n}^\infty a_j^{-1}\,v_n^+=\prod_{j=n}^\infty c_j\,w_n^+, \\
u_n^- &=\prod_{j=-\infty}^{n-1} c_j^{-1}\,w_n^-=\prod_{j=-\infty}^{n-1} a_j\,v_n^-,
\end{split}
\end{equation}
where $v^{\pm}=(u^{\pm}_k)$ ($w^{\pm}=(w^{\pm}_k)$) are the Jost solutions of \eqref{der} (\eqref{del}), respectively.

We are aimed at obtaining the bounds for the Jost solutions $v^+$ and $w^-$ by reducing the difference equations to the
Volterra-type discrete integral equations. The unity of the corresponding coefficients (the first one in \eqref{der} and the third 
one in \eqref{del}) appears to be crucial.

Define the (non-symmetric) Green kernels by
\begin{equation}\label{green}
\begin{split}
G_r(n,m;z) &:=\left\{
  \begin{array}{ll}
    \frac{z^{m-n}-z^{n-m}}{z-z^{-1}}, & \hbox{$m\ge n,$} \\
    0, & \hbox{$m\le n,$}
  \end{array}
\right. \\
G_l(n,m;z) &:=\left\{
  \begin{array}{ll}
   0, & \hbox{$m\ge n,$} \\
    \frac{z^{n-m}-z^{m-n}}{z-z^{-1}}, & \hbox{$m\le n,$}
  \end{array}
\right. \quad n,m\in\bz, \ \ z\in\bd_0.
\end{split}
\end{equation}
The basic property of the kernels can be verified directly
\begin{equation}\label{grker}
\begin{split}
G_{r,l}(n,m-1;z)+G_{r,l}(n,m+1;z)-\Bigl(z+\frac1z\Bigr)\,G_{r,l}(n,m;z) &=\d_{n,m}, \\
G_{r,l}(n-1,m;z)+G_{r,l}(n+1,m;z)-\Bigl(z+\frac1z\Bigr)\,G_{r,l}(n,m;z) &=\d_{n,m}.
\end{split}
\end{equation}
The kernels
\begin{equation}\label{traker}
\begin{split}
T_r(n,m;z) &:=-b_mG_r(n,m;z)+(1-a_{m-1}c_{m-1})G_r(n,m-1;z), \\
T_l(n,m;z) &:=-b_mG_l(n,m;z)+(1-a_{m}c_{m})G_l(n,m+1;z), \ \ z\in\bd_0,
\end{split}
\end{equation}
$n,m\in\bz$, are the key players of the game.

\begin{theorem}\label{difinteq}
The Jost solution $v^+=(v^+_k)$ of the difference equation $\eqref{der}$ at $+\infty$ satisfies the discrete Volterra equation
\begin{equation}\label{voltr}
v^+_n(z)=z^n+\sum_{m=n+1}^\infty T_r(n,m;z)v^+_m(z), \quad n\in\bz, \quad z\in\bd_0.
\end{equation}
Conversely, each solution $v=(v_n)$ of $\eqref{voltr}$ solves $\eqref{der}$.

Similarly, the Jost solution $w^-=(w^-_k)$ of $\eqref{del}$ at $-\infty$ satisfies the discrete Volterra equation
\begin{equation}\label{voltl}
w^-_n(z)=z^{-n}+\sum_{m=-\infty}^{n-1} T_l(n,m;z)w^-_m(z), \quad n\in\bz, \quad z\in\bd_0.
\end{equation}
Conversely, each solution $w=(w_n)$ of $\eqref{voltl}$ solves $\eqref{del}$.
\end{theorem}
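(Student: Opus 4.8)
\textbf{Proof plan for Theorem \ref{difinteq}.}
The strategy is the standard one for converting a three‑term recurrence into a discrete Volterra equation, using the Green kernels $G_r$, $G_l$ as discrete analogues of the variation‑of‑parameters kernel. I will treat the case of $v^+$; the case of $w^-$ is entirely symmetric (replace $G_r$ by $G_l$, the forward sum by a backward sum, and use the second companion equation \eqref{del}).

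First I would verify the \emph{converse} direction, since it fixes the algebra. Suppose $v=(v_n)$ solves \eqref{voltr}. Apply the difference operator $v_{n-1}+a_nc_nv_{n+1}-(z+z^{-1})v_n$ to both sides. The term $z^n$ is annihilated because $z^{n-1}+z^{n+1}-(z+z^{-1})z^n=0$, and more precisely $z^n$ is the free solution. For the sum, I would push the difference operator (acting in the variable $n$) through the summation; the only subtlety is that the summation range $m\ge n+1$ depends on $n$, so shifting $n$ to $n\pm1$ changes which boundary terms appear. Writing $T_r(n,m;z)=-b_mG_r(n,m;z)+(1-a_{m-1}c_{m-1})G_r(n,m-1;z)$ and using the second identity in \eqref{grker} — namely $G_r(n-1,m;z)+G_r(n+1,m;z)-(z+z^{-1})G_r(n,m;z)=\d_{n,m}$ — the interior sum collapses to the contributions at $m=n$, and these, after carefully collecting the $G_r(n,n;z)=0$, the $G_r(n,n-1;z)=0$ boundary values and the terms produced by the moving lower limit, reproduce exactly $-b_nv_n+(1-a_{n-1}c_{n-1})v_{n-1}$; equivalently, after rearrangement, $v_{n-1}+b_nv_n+a_nc_nv_{n+1}=(z+z^{-1})v_n$, which is \eqref{der}. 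The careful bookkeeping of the boundary terms arising from the $n$‑dependent summation limits is the one place where a sign error is easy, so that is the step I would write out in full.

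Next, for the \emph{forward} direction, I would first establish that the Volterra equation \eqref{voltr} has a unique solution with $v^+_n(z)=z^n(1+o(1))$ as $n\to+\infty$: since $G_r(n,m;z)$ is bounded for $m\ge n$ (indeed $|G_r(n,m;z)|\le |z|^{m-n}/|1-z^2|\cdot(\text{const})$ thanks to $z\in\bd_0$) and $\sum_m(|b_m|+|1-a_{m-1}c_{m-1}|)=\dd<\infty$, the successive approximations $v^{+,(0)}_n=z^n$, $v^{+,(k+1)}_n=z^n+\sum_{m>n}T_r(n,m;z)v^{+,(k)}_m$ converge geometrically for $n$ large, and the resulting fixed point is the unique solution with the prescribed asymptotics (this is the construction referred to via \cite[Section 7.5]{te00}, \cite{eggo05}). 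By the converse direction already proved, this fixed point solves \eqref{der}; normalizing via $z^{-n}v^+_n\to1$ (which one reads off from the Volterra equation because the sum is $o(1)\cdot z^n$) identifies it with the Jost solution $v^+$ as defined in \eqref{jossol}–\eqref{trans}. Hence $v^+$ satisfies \eqref{voltr}.

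The main obstacle I anticipate is purely the discrete‑calculus bookkeeping in the converse direction: unlike the continuous case, ``differentiating under the integral sign'' here means shifting the index $n$, and because the lower summation limit is $n+1$ one must correctly account for the terms $T_r(n,n+1;z)$, $T_r(n-1,n;z)$, etc., that enter or leave the sum. Getting these boundary contributions to telescope against the Kronecker‑delta output of \eqref{grker}, and to match the coefficients $b_n$ and $1-a_{n-1}c_{n-1}$ in \eqref{der} (note the index shift $m-1$ in the definition of $T_r$ is exactly what is needed here), is the computational heart of the proof. Everything else — the estimate on $G_r$, the convergence of successive approximations, and the symmetric treatment of $w^-$ with $G_l$ and \eqref{del} — is routine.
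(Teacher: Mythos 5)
Your overall toolkit (the Green kernels, the identities \eqref{grker}, telescoping summation by parts) is the right one, and your converse direction follows the same route as the paper; but as written the plan contains concrete errors and one structural gap. On the converse: the difference operator you propose, $v_{n-1}+a_nc_nv_{n+1}-(z+z^{-1})v_n$, does not annihilate $z^n$ (your own justification $z^{n-1}+z^{n+1}-(z+z^{-1})z^n=0$ is for coefficient $1$, not $a_nc_n$, on $v_{n+1}$) and does not mesh with the second identity of \eqref{grker}, which has coefficient $1$ on $G_r(n+1,m)$. The operator that works is the free one, $v_{n-1}+v_{n+1}-(z+z^{-1})v_n$; applied to \eqref{voltr} it leaves the residual $-b_nv_n+(1-a_nc_n)v_{n+1}$, which rearranges to \eqref{der}. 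The residual you state, $-b_nv_n+(1-a_{n-1}c_{n-1})v_{n-1}$, is the one belonging to the left equation \eqref{del}/\eqref{voltl}, and your ``equivalent rearrangement'' of it is not \eqref{der} but $a_{n-1}c_{n-1}v_{n-1}+b_nv_n+a_nc_nv_{n+1}=(z+z^{-1})v_n$. Since you identify this bookkeeping as the computational heart of the proof, the mis-specified target is a genuine defect rather than a cosmetic one.

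On the forward direction, your route differs from the paper's and has a gap. The paper multiplies \eqref{grker} by $v^+_m$ and \eqref{der} by $G_r(n,m)$, subtracts, sums over $n+1\le m\le N$, and obtains an identity valid for \emph{every} solution of \eqref{der}, in which the boundary term $G_r(n,N+1)v^+_N-a_Nc_NG_r(n,N)v^+_{N+1}$ tends to $z^n$ precisely because of \eqref{jossol}; no uniqueness statement is needed. Your alternative (build the Volterra fixed point, apply the converse, identify it with $v^+$) requires not the uniqueness of the solution of the Volterra equation, which is what you argue, but the uniqueness of the solution of the difference equation \eqref{der} with asymptotics $z^{-n}v_n\to1$ --- otherwise the Jost solution defined by \eqref{jossol} need not coincide with your fixed point. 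That uniqueness is true but needs a separate Wronskian-type argument you do not supply. Finally, your bound $|G_r(n,m;z)|\lesssim |z|^{m-n}/|1-z^2|$ is wrong for $z\in\bd_0$: since $|z|<1$ the dominant term in $G_r$ is $z^{n-m}$, so the kernel \emph{grows} like $|z|^{-(m-n)}$; convergence of the successive approximations holds only because this growth is compensated by the decay $v_m\sim z^m$ of the iterates, which is exactly the rescaling $f_m^r=v_m^+z^{-m}-1$, $\widetilde T_r(n,m;z)=T_r(n,m;z)z^{m-n}$ that the paper performs before invoking \cite[Lemma 7.8]{te00}.
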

\begin{proof}
We multiply the first relation \eqref{grker} for $G_r$ by $v^+_m$, 
\eqref{der} by $G_r(n,m)$, and subtract the later from the former
\begin{equation*}
\begin{split}
\Bigl[G_r(n,m+1)v^+_m-G_r(n,m)v^+_{m-1}\Bigr] &+\Bigl[-b_mG_r(n,m)+G_r(n,m-1)\Bigr]v^+_m \\
&-a_mc_mG_r(n,m)v^+_{m+1}=\d_{n,m}v^+_m. 
\end{split}
\end{equation*}
Next, taking into account that $G_r(n,n+1)=1$, $G_r(n,n)=0$, we sum up over $m$ from $n+1$ to $N$ 
\begin{equation*}
\begin{split}
G_r(n,N+1)v^+_N &+\sum_{m=n+1}^N \Bigl[-b_mG_r(n,m)+G_r(n,m-1)\Bigr]v^+_m \\
&-\sum_{m=n}^N a_mc_mG_r(n,m)v^+_{m+1}=v^+_n,
\end{split}
\end{equation*}
or
\begin{equation*}
v^+_n=G_r(n,N+1)v^+_N-a_Nc_NG_r(n,N)v^+_{N+1}+\sum_{m=n+1}^N T_r(n,m)v^+_m.
\end{equation*}
The latter equality holds for an arbitrary solution of \eqref{der}. If $v^+$ is the Jost solution at $+\infty$, then, by \eqref{green},
$$ \lim_{N\to\infty} G_r(n,N+1)v^+_N-a_Nc_NG_r(n,N)v^+_{N+1}=z^n, $$
and \eqref{voltr} follows.

The direct reasoning for \eqref{del} is the same. We multiply the first relation \eqref{grker} for $G_l$ by $w^-_m$, 
\eqref{del} by $G_l(n,m)$, and subtract the later from the former
\begin{equation*}
\begin{split}
\Bigl[G_l(n,m-1)w^-_m-G_l(n,m)w^-_{m-1}\Bigr] &+\Bigl[-b_mG_l(n,m)+G_r(n,m+1)\Bigr]w^-_m \\
&-a_{m-1}c_{m-1}G_l(n,m)w^-_{m-1}=\d_{n,m}w^-_m. 
\end{split}
\end{equation*}
The summation over $m$ from $-N$ to $n-1$ gives, as above
\begin{equation*}
w^-_n=G_l(n,-N-1)w^-_{-N}-a_{-N-1}c_{-N-1}G_l(n,-N)w^-_{-N-1}+\sum_{m=-N}^{n-1} T_l(n,m)w^-_m.
\end{equation*}
If $w^-$ is the Jost solution of \eqref{del} at $-\infty$, then
$$ \lim_{N\to\infty} G_l(n,-N-1)w^-_{-N}-a_{-N-1}c_{-N-1}G_l(n,-N)w^-_{-N-1}=z^{-n}, $$
and \eqref{voltl} follows.

To prove the converse statements, let $v=(v_n)$ be a solution of \eqref{voltr}. Then
\begin{equation*}
\begin{split}
v_{n-1}+v_{n+1} &=\Bigl(z+\frac1z\Bigr)\,z^n+T_r(n-1,n)v_n+T_r(n-1,n+1)v_{n+1} \\
&+\sum_{m=n+2}^\infty\Bigl[T_r(n-1,m)+T_r(n+1,m)\Bigr]v_m.
\end{split}
\end{equation*}
But
\begin{equation*}
\begin{split}
T_r(n-1,n)v_n &=-b_nv_n, \\
T_r(n-1,n+1)v_{n+1} &=\Bigl[-b_{n+1}G_r(n-1,n+1)+(1-a_nc_n)G_r(n-1,n)\Bigr]v_{n+1} \\
&=-\Bigl(z+\frac1z\Bigr)b_{n+1}v_{n+1}+(1-a_nc_n)v_{n+1} \\
&=\Bigl(z+\frac1z\Bigr)T_r(n,n+1)v_{n+1}+(1-a_nc_n)v_{n+1}, \\
T_r(n-1,m)+T_r(n+1,m) &=\Bigl(z+\frac1z\Bigr)T_r(n,n+1).
\end{split}
\end{equation*}
Finally,
\begin{equation*} 
v_{n-1}+v_{n+1}=-b_nv_n+(1-a_nc_n)v_{n+1}+\Bigl(z+\frac1z\Bigr)\,\Bigl(z^n+\sum_{m=n+1}^\infty T_r(n,m)v_m\Bigr),
\end{equation*}
which is \eqref{der}. The proof for the second converse statement is identical.
\end{proof}

It is convenient and instructive to introduce new variables in both \eqref{voltr} and \eqref{voltl}
\begin{equation}
\begin{split}
f_m^r &:=v_m^+z^{-m}-1, \quad \widetilde T_r(n,m;z):=T_r(n,m;z)z^{m-n}, \\
f_m^l &:=w_m^-z^m-1,    \quad \ \widetilde T_l(n,m;z):=T_l(n,m;z)z^{n-m},
\end{split}
\end{equation}
so the Volterra equations turn into
\begin{equation}\label{modvolr}
\begin{split}
f_n^r(z) &=g_n^r(z)+\sum_{m=n+1}^\infty \widetilde T_r(n,m;z)f_m^r(z), \\ 
g_n^r(z) &:=\sum_{m=n+1}^\infty \widetilde T_r(n,m;z),
\end{split}
\end{equation}
and
\begin{equation}\label{modvoll}
\begin{split}
f_n^l(z) &=g_n^l(z)+\sum_{m=-\infty}^{n-1} \widetilde T_l(n,m;z)f_m^l(z), \\ 
g_n^l(z) &:=\sum_{m=-\infty}^{n-1} \widetilde T_l(n,m;z).
\end{split}
\end{equation}
These are better than the original ones owing to the simple analytic properties of the kernels $\widetilde T_{r,l}$. Indeed,
it is not hard to verify that $\widetilde T_{r,l}(n,m;\cdot)$ are polynomials of $z$, and 
\begin{equation}\label{boundkerr}
\begin{split}
\bigl|\widetilde T_r(n,m;z)\bigr| &\le \d_m^r \min\Bigl\{(m-n)_+, \ \frac{2|z|}{|z^2-1|}\Bigr\}, \\ 
\d_m^r &:=|b_m|+|1-a_{m-1}c_{m-1}|, \quad n,m\in\bz, \quad z\in\ovl{\bd},
\end{split}
\end{equation}
\begin{equation}\label{boundkerl}
\begin{split}
\bigl|\widetilde T_l(n,m;z)\bigr| &\le \d_m^l \min\Bigl\{(n-m)_+, \ \frac{2|z|}{|z^2-1|}\Bigr\}, \\ 
\d_m^l &:=|b_m|+|1-a_{m}c_{m}|, \quad n,m\in\bz, \quad z\in\ovl{\bd}.
\end{split}
\end{equation}
In particular,
\begin{equation}\label{boundker}
\bigl|\widetilde T_{r,l}(n,m;z)\bigr|\le \d_m^{r,l}\,|\o(z)|, \quad \o(z):=\frac{2z}{1-z^2}\,, \quad z\in\bd_1:=\ovl\bd\bsl\{\pm1\}.
\end{equation}
So, the series for $g_n^{r,l}$ converge absolutely and uniformly on each compact subset of $\ovl\bd$, which omits $\pm1$, and
\begin{equation*}
|g_n^{r,l}(z)|\le |\o(z)|\dd_n^{r,l}, \quad \dd_n^r:=\sum_{m=n+1}^\infty \d_m^r, \quad \dd_n^l:=\sum_{m=-\infty}^{n-1} \d_m^l.
\end{equation*}

According to the general result \cite[Lemma 7.8]{te00} concerning the discrete Volterra equations, we have for $n\in\bz$ and $z\in\bd_1$
\begin{equation}\label{voltsol}
\begin{split}
|f_n^r(z)| =|z^{-n}v_n^+ -1| &\le |\o(z)|\dd_n^r\,\exp\bigl\{|\o(z)|\dd_n^r\bigr\}, \\ 
|f_n^l(z)| =|z^{n}w_n^- -1| &\le |\o(z)|\dd_n^l\,\exp\bigl\{|\o(z)|\dd_n^l\bigr\},
\end{split}
\end{equation}
or
\begin{equation}\label{voltsol1}
\begin{split}
|v_n^+ -z^n| &\le |z|^n|\o(z)|\dd_n^r\,\exp\bigl\{|\o(z)|\dd_n^r\bigr\}, \\ 
|w_n^- -z^{-n}| &\le |z|^{-n}|\o(z)|\dd_n^l\,\exp\bigl\{|\o(z)|\dd_n^l\bigr\}.
\end{split}
\end{equation}

\section{The Wronskian and the Lieb--Thirring inequality}

Let us go back to the main equation \eqref{3term}. Given its two solutions $u'=(u_n')$ and $u''=(u_n'')$, the equality below is obvious
$$ a_{n-1}(u_{n-1}'u_n''-u_n'u_{n-1}'')=c_n(u_{n}'u_{n+1}''-u_{n+1}'u_{n}''). $$
The Wronskian $W(u',u'')$ is naturally defined as
$$ W(u',u''):=\b_n(u_{n}'u_{n+1}''-u_{n+1}'u_{n}''), \quad \b_n:=a_n\prod_{j=-\infty}^n \frac{c_j}{a_j}\,. $$
Such choice of $\b_n$ makes the Wronskian independent of $n$.

From now on we put $u'=u^+$, $u''=u^-$. By the transition formulas \eqref{trans}, we can express $W(u^+,u^-)$ in terms of $v^+$ and $w^-$:
\begin{equation*}
\begin{split}
W(u^+,u^-) &=\b_n(u_n^+u_{n+1}^- -u_{n+1}^+u_n^-) \\
&=\b_n\Bigl(\prod_{j=n}^\infty a_j^{-1}\prod_{j=-\infty}^n c_j^{-1}\,v_n^+w_{n+1}^- 
-\prod_{j=n+1}^\infty a_j^{-1}\prod_{j=-\infty}^{n-1} c_j^{-1}\,v_{n+1}^+w_n^-\Bigr) \\
&=\prod_{j=-\infty}^\infty a_j^{-1}\,(v_n^+w_{n+1}^- - a_nc_n\,v_{n+1}^+w_n^-).
\end{split}
\end{equation*}
So the bound for the Wronskian will follows from the inequalities \eqref{voltsol1}. Note also that
\begin{equation*}
\dd^{r,l}_n\le\dd:=\sum_{j=-\infty}^\infty (|b_j|+|1-a_jc_j|), \qquad n\in\bz.
\end{equation*}

We are now ready for

{\it Proof of Theorem \ref{mainth}}. Put
\begin{equation}
\begin{split}
U(z) &:=\frac{\o(z)}2 \prod_{j=-\infty}^\infty a_j W(u^+,u^-) \\
&=\frac{\o(z)}2 \bigl(v_0^+(z)w_1^-(z)-v_1^+(z)w_0^-(z)+(1-a_0c_0)v_1^+(z)w_0^-(z)\bigr).
\end{split}
\end{equation}
If
$$ p_j(z):=v_j^+(z)-z^j, \quad q_j(z):=w_j^-(z)-z^{-j}, \quad j=0,1, $$
then
\begin{equation*}
\begin{split}
U(z) &:=\frac{\o(z)}2 \Bigl[(1+p_0(z))(z^{-1}+q_1(z))-(z+p_1(z))(1+q_0(z))+(1-a_0c_0)v_1^+(z)w_0^-(z)\Bigr] \\
&=1+\frac{\o(z)}2 d(z), \quad d(z)=d_1(z)-d_2(z)+d_3(z),
\end{split}
\end{equation*} 
where
\begin{equation*}
\begin{split}
d_1(z) &:=q_1(z)+z^{-1}p_0(z)+p_0(z)q_1(z), \\
d_2(z) &:=p_1(z)+zq_0(z)+p_1(z)q_0(z), \\
d_3(z) &:=(1-a_0c_0)v_1^+(z)w_0^-(z).
\end{split}
\end{equation*}
We proceed with the upper bound for the function $U$ term by term.

1. For $d_1$ we have
$$ \frac{|\o(z)|}2\,|d_1(z)|\le \frac{|\o(z)|}2\bigl(|q_1(z)|+|z^{-1}p_0(z)|+|p_0(z)q_1(z)|\bigr). $$
In view of \eqref{voltsol1}
\begin{equation*}
\begin{split}
\frac{|\o(z)|}2\bigl(|q_1(z)|+|z^{-1}p_0(z)|\bigr) &\le\frac{|\o(z)|^2}{|z|}\dd e^{|\o(z)|\dd}, \\
\frac{|\o(z)|}2 |p_0(z)q_1(z)| &\le \frac{|\o(z)|^3}{2|z|}\dd^2 e^{2|\o(z)|\dd}\le \frac{|\o(z)|^2}{2|z|}\dd e^{3|\o(z)|\dd}, 
\end{split}
\end{equation*}
and so
$$ \frac{|\o(z)|}2\,|d_1(z)|\le\frac{3|\o(z)|^2}{2|z|}\dd e^{3|\o(z)|\dd}. $$
Next, it is clear that
$$
|1-z^2|+|z|\ge 1, \quad 1+\frac{|\o(z)|}2\ge\frac{|\o(z)|}{2|z|} $$
or
$$ \left|\frac{\o(z)}{z}\right|\le 2(1+|\o(z)|). $$
Hence,
$$ \frac{3|\o(z)|^2}{2|z|}\le 3|\o(z)|(1+|\o(z)|) $$
and finally
\begin{equation}\label{d1}
\begin{split}
&{}\frac{|\o(z)|}2\,|d_1(z)| \le 3(|\o(z)|+|\o(z)|^2)\dd e^{3|\o(z)|\dd} \\
&\le 3\bigl\{|\o(z)|(\dd^{1/2}+\dd)+|\o(z)|^2(\dd^{1/2}+\dd)^2\bigr\}\,e^{3|\o(z)|(\dd^{1/2}+\dd)}.
\end{split}
\end{equation}

2. For $d_2$ we have
$$ \frac{|\o(z)|}2\,|d_2(z)|\le \frac{|\o(z)|}2\bigl(|p_1(z)|+|zq_0(z)|+|p_1(z)q_0(z)|\bigr). $$
It is immediate from \eqref{voltsol1} that
\begin{equation*}
\begin{split} 
|p_1(z)| &\le |\o(z)|\dd e^{|\o(z)\dd}, \quad |q_0(z)|\le |\o(z)|\dd e^{|\o(z)\dd}, \\ 
|p_1(z)q_0(z)| &\le |\o(z)|^2\dd^2 e^{2|\o(z)\dd},
\end{split}
\end{equation*}
and so
\begin{equation}\label{d2}
\begin{split}
&{}\frac{|\o(z)|}2\,|d_2(z)| \le |\o(z)|^2\dd e^{|\o(z)|\dd}+\frac{|\o(z)|^3}2\dd^2 e^{2|\o(z)|\dd} \\
&\le 2|\o(z)|^2\dd e^{3|\o(z)|\dd}\le 2|\o(z)|^2(\dd^{1/2}+\dd)^2\,e^{3|\o(z)|(\dd^{1/2}+\dd)}.
\end{split}
\end{equation}

3. For $d_3$ we have by \eqref{voltsol1},
$$ \frac{|\o(z)|}2\,|d_3(z)|\le \frac{|\o(z)|}2 \dd\Bigl(1+|\o(z)|\dd e^{|\o(z)|\dd}\Bigr)^2, $$
and since $1+xe^x\le e^{2x}$ for $x\ge0$, then
\begin{equation}\label{d3}
\frac{|\o(z)|}2\,|d_3(z)|\le \frac{|\o(z)|}2 \dd e^{4|\o(z)|\dd}\le \frac{|\o(z)|}2 (\dd^{1/2}+\dd) e^{4|\o(z)|(\dd^{1/2}+\dd)}.
\end{equation}

A combination of \eqref{d1} -- \eqref{d3} produces the following bound for $U$
\begin{equation}\label{perdetto1}
\begin{split}
|U(z)-1| &\le (4x+5x^2)e^{4x}, \qquad x:=|\o(z)|(\dd^{1/2}+\dd), \\
|U(z)| &\le (1+4x+5x^2)e^{4x}\le e^{8x}.
\end{split}
\end{equation}

By the non-self-adjoint version of \cite[Proposition 10.6]{kisi03} (the calculation there is algebraic and so immediately extends 
to the non-self-adjoint case), $U(\cdot)=L(\cdot,J)$,
so we come to the bound for the perturbation determinant
\begin{equation}\label{bopd}
\log |L(z,J)|\le \frac{16|z|}{|1-z^2|}\,(\dd^{1/2}+\dd), \qquad L(0,J)=1.
\end{equation} 

The rest is standard nowadays. According to \cite[Theorem 4]{haka11}, for each $\ep\in(0,1)$ there is a constant $C(\ep)>0$ so that the
Blaschke-type condition holds for the zero set (divisor) $Z(L)$
$$ \sum_{\z\in Z(L)} (1-|\z|)\frac{|\z^2-1|^\ep}{|\z|^\ep}\le C(\ep)(\dd^{1/2}+\dd), $$
(each zero is taken with its multiplicity). The latter inequality turns into \eqref{mainlt} when we go over to the Zhukovsky images
and take into account the distortion for the Zhukovsky function \cite[Lemma 7]{haka11}. The proof is complete.

\smallskip

For the discrete Schr\"odinger operators $J$ $(a_j=c_j\equiv 1)$ \eqref{mainlt1} follows from 
$$ \dd=\sum_{j=-\infty}^\infty |b_j|=\|J-J_0\|_1. $$ 

\begin{remark}\label{speenc}
As a byproduct, the first bound in \eqref{perdetto1} for the perturbation determinant provides some information on the location of 
the discrete spectrum (spectral enclosure). Indeed, let $\ka$ be a unique positive root of the equation
$$ (4x+5x^2)e^{4x}=1, \qquad \ka\approx 0.129. $$
Then $L\not=0$ in $\bd$ as long as 
\begin{equation*} 
|\o(z)|(\dd^{1/2}+\dd)<\ka, \qquad \frac{|z|}{|1-z^2|}<\frac{\ka}{2(\dd^{1/2}+\dd)}\,, 
\end{equation*}
or in terms of the Zhukovsky images
\begin{equation}\label{spenc1}
\s_d(J)\subset \left\{\l\in\bc\backslash [-2,2]: \ |\l^2-4|\le \left(\frac{2(\dd^{1/2}+\dd)}{\ka}\right)^2\right\}. 
\end{equation}
So, the discrete spectrum lies in a certain Cassini oval.

The spectral enclosure is normally derived from the Birman--Schwinger principle. Precisely,
$$ \l(z)\in\s_d(J) \ \Rightarrow \ \|K(z)\|\le1, $$
$K$ is the Birman--Schwinger operator. In our case one has
\begin{equation}\label{spenc2}
\s_d(J)\subset \bigl\{\l\in\bc\backslash [-2,2]: \ |\l^2-4|\le 324\|J-J_0\|_1^2\bigr\}. 
\end{equation}
It might be curious comparing the ovals in \eqref{spenc1} and \eqref{spenc2}.

For the discrete Schr\"odinger operators the sharp oval which contains the discrete spectrum is known \cite{ibst19}
\begin{equation}\label{spenc3}
\s_d(J)\subset \bigl\{\l\in\bc\backslash [-2,2]: \ |\l^2-4|\le \|J-J_0\|_1^2\bigr\}. 
\end{equation}

\end{remark}

\end{document}